\newcommand{\labbel}[1]{\label{#1} [[{\bf #1}]]}  
\renewcommand{\labbel}{\label}
\newcommand{\sharr}{\text{\tiny{$\rightarrow$}}} 
\newcommand{\nup}{{\centernot\uparrow}}
\newtheorem{theorem}{Theorem}[section]
\newtheorem{lemma}[theorem]{Lemma}
\newtheorem{proposition}[theorem]{Proposition} 
\newtheorem{corollary}[theorem]{Corollary} 
\newtheorem{claim}[theorem]{Claim} 
\newtheorem*{claim*}{Claim}
\newtheorem*{theorem*}{Theorem}
\newtheorem*{proposition*}{Proposition}
\newtheorem*{corollary*}{Corollary}
\newtheorem*{lemma*}{Lemma}
\newtheorem*{scholion*}{Scholion}
\theoremstyle{definition}
\newtheorem{definition}[theorem]{Definition}
\theoremstyle{remark}
\newtheorem{remark}[theorem]{Remark}
\newtheorem*{remark*}{Remark}
\newtheorem*{remarks*}{Remarks}
\newtheorem*{observation*}{Observation}
\numberwithin{equation}{section}
\begin{document}

\title{Multi-argument specialization semilattices}

\author{Paolo Lipparini} 

\email{lipparin@axp.mat.uniroma2.it}

\urladdr{http://www.mat.uniroma2.it/~lipparin}

\address{Dipartimento di Matematica\\Viale della Multa Ricerca
 Scientifica  \\Universit\`a di Roma ``Tor Vergata'' 
\\I-00133 ROME ITALY}

\subjclass{Primary  06A15; Secondary 54A05;  06A12}

\keywords{multi-argument specialization semilattice; closure semilattice; closure space; 
universal extension}

\date{\today}

\thanks{Work performed under the auspices of G.N.S.A.G.A. 
Work partially supported by PRIN 2012 ``Logica, Modelli e Insiemi''.
The author acknowledges the MIUR Department Project awarded to the
Department of Mathematics, University of Rome Tor Vergata, CUP
E83C18000100006.}

\begin{abstract}
If $X$ is a closure space with closure $K$,
we consider the semilattice
$(\mathcal P(X), \cup)$
endowed with further relations
$ x \sqsubseteq y_1, y_2, \dots, y_n$
(a distinct $n+1$-ary  relation for each $n \geq 1$),
whose  interpretation 
is
$x \subseteq Ky_1 \cup Ky_2 \cup \dots \cup  Ky_n $.

We present axioms for such \emph{multi-argument specialization semilattices} 
and show that this list of axioms is complete for substructures,
namely, every model satisfying the axioms can be embedded into
some structure associated to some closure space as in the previous sentence.
We also provide a canonical embedding of 
a multi-argument specialization semilattice into 
(the reduct of) some closure semilattice. 
 \end{abstract}

\maketitle  

\section{Introduction} \labbel{intro} 

The notion of \emph{closure} 
is pervasive in mathematics,
both in the topological sense  and in the sense of 
\emph{hull}, \emph{generated by}.
The general notion of a closure space
which can be abstracted from the above two cases
has been dealt with or foreshadowed by 
such mathematicians as 
  Schr\"oder, Dedekind, Cantor, Riesz, Hausdorff, Moore,
\v Cech,  Kuratowski, Sierpi\'nski, Tarski, Birkhoff and
Ore, as listed in Ern\'e \cite{E}, with applications, 
among other, to ordered sets, lattice theory,
logic, algebra,  topology, computer science
and connections with category theory.
Due to the importance of the notion, it is interesting to study
variations and weakenings. 

In detail,   a \emph{closure space} is
a set $X$ together with
a unary operation $K$ on $\mathcal P(X)$
such that  $x \leq Kx$ ($K$ is \emph{extensive}),
 $KKx=Kx$  (\emph{idempotent}) and 
$x \leq y$ implies $Kx \leq Ky$ (\emph{isotone}),  
for all subsets $x, y$ of $X$. 
In particular, by Kuratowski characterization,
a topological space is a closure space satisfying 
$ K\emptyset= \emptyset $
and $K(x \cup y)=Kx \cup Ky$,
for all subsets $x,y$ of $X$.
Similar to the case of topological spaces, closure
spaces have an equivalent characterization
in terms of a family of closed subsets.  
See \cite{E} for further details. 
 
An asymmetry in the otherwise
clean correspondence \cite{MT} between topology and 
its algebraic counterpart 
 given by the Kuratowski axioms
is the fact that continuous functions 
among topological spaces do not
preserve closures. Indeed, if $\varphi$  is a function 
between topological spaces   and
$\varphi ^\sharr$ denotes the direct image function associated to $\varphi$,
then $\varphi$  
is continuous if and only if 
$ \varphi ^\sharr (Kz) \subseteq K \varphi ^\sharr(z)$,  and
this is not the same as
 $ \varphi ^\sharr (Kz) = K \varphi ^\sharr(z)$.
Henceforth in \cite{mtt,mttlib,mttna} we started the study of
\emph{specialization semilattices}, that is, 
join semilattices endowed with 
a further coarser preorder $ \sqsubseteq $ satisfying 
the condition
\begin{align}
\labbel{s3}    \tag{S3}
 &a \sqsubseteq b  \ \&\  a_1 \sqsubseteq b
 \Rightarrow 
 a \vee a_1 \sqsubseteq b. 
\end{align}
If $X$ is a topological space,
then $( \mathcal P(X), \cup, \sqsubseteq )$
is a specialization semilattice,
where $ \sqsubseteq $ 
is the binary relation on 
$\mathcal P(X)$
defined by
$ a \sqsubseteq b$
if  $a \subseteq Kb$,
for $a, b \subseteq X$,  
and where $K$ is topological closure.
In \cite{mtt} we showed that every 
specialization semilattice can be represented in the above way.
Moreover, in this situation 
continuous functions correspond exactly to 
homomorphisms of specialization semilattices (more details below). 

In passing, let us mention 
that there are  many more examples of specialization semilattices, which do 
not arise from topological notions.
Any semilattice homomorphism 
$\varphi$  
induces a specialization 
on the domain semilattice by setting
$a \sqsubseteq b$ if $\varphi(a) \leq \varphi (b)$.
Moreover, every specialization semilattice can be represented
in this way. Thus specialization semilattices are at the same time both
``substructures'' of topological spaces   and quotients
of semilattices.
As an  example, if we consider
the quotient of $\mathcal P(X)$ modulo the ideal of finite subsets
of $X$ and the corresponding quotient function, 
the corresponding ``specialization'' is inclusion modulo finite.  
Similar  quotient constructions appear 
in many disparate settings, in various fields 
and with a wide range of applications.
For example, such structures appear, e.~g.,  in theoretical studies 
related to measure theory,
algebraic logic and even theoretical physics.
See \cite{mtt,mttlib} for further examples and details.

Let us now return to the 
representation of a specialization semilattice 
as a substructure of 
$( \mathcal P(X), \cup, \sqsubseteq )$.
As hinted above, under this interpretation, 
homomorphisms of specialization semilattices
are indeed functorial, that is, if $X$ and $Y$ are 
topological spaces and $\varphi:X \to Y$,
then $\varphi$  is continuous if and only if 
the  image function $\varphi^\sharr$
is a homomorphism of the associated specialization semilattices.   
See \cite{mtt}  for details.

All the above arguments apply also in the more general situation 
of closure spaces, thus specialization semilattices
can be thought at the same time as subreducts of 
topological spaces and as subreducts of closure spaces.
Namely, specialization semilattices  do not distinguish between the two kinds of structures.
In order to distinguish the two cases it is appropriate to introduce a 
ternary relation $R$ interpreted by
\begin{equation} \labbel{mult}     
\text{  $R(a;b,c)$ \quad if  \quad  
$a  \subseteq  Kb \cup Kc$. }
 \end{equation}
Then topological spaces satisfy
$R(a;b,c) \Leftrightarrow R(a;b \vee c, b \vee c)  $,
an equivalence which is generally false in closure spaces.
As above, continuous functions correspond to 
homomorphisms which respect $R$, and similarly
for the $n+1$-ary relations we are going to
introduce.
See \cite[Remark 6.3]{mtt}
 for further comments.

For notational simplicity, we shall write 
$ a \sqsubseteq^2 b,c$ for  $R(a;b,c)$ and similarly 
for relations of larger arity.
We shall frequently omit the superscript
and  maintain it only in a few cases for clarity.
 Formally, we shall consider a language
with the join-semilattice symbol
$\vee$ and with $n+1$-ary relation symbols $R_n$, for $n \geq 1$.
 Then $R_n(a,b_1, \dots, b_n)$
shall be abbreviated as 
$a \sqsubseteq^n b_1, \dots, b_n$,
or simply $a \sqsubseteq b_1, \dots, b_n$ and its intended interpretation
is given by $a  \subseteq  Kb_1 \cup \dots \cup  Kb_n$ in some closure space.
Notice that $ \sqsubseteq ^1$ is the case of 
the ``specializations'' we have dealt with in the above discussions.

Here we present an axiomatization of such
multi-argument specialization semilattices, and show that
any model of the axioms can be embedded  
into a closure space, under the above interpretation.
Moreover, every multi-argument specialization semilattice 
has a canonical ``free'' extension into a principal specialization semilattice,
that is, a specialization semilattice in which closure always exists.
This is a result parallel to \cite{mttlib,mttna}.

\section{Preliminaries} \labbel{prel}

A \emph{closure operation} on a poset
is a unary operation $K$ which is extensive,
 idempotent and isotone.
A \emph{closure semilattice} is a join-semilattice endowed with a closure operation.
See \cite[Section 3]{E} for  a detailed study of closure posets
and semilattices,
with many applications.
Similar to the case of closure 
spaces and topologies, a closure operation on a poset induces
a \emph{specialization $ \sqsubseteq $} as follows:
$ x \sqsubseteq y$ if $x \leq Ky$.
Here $ \sqsubseteq $ is $ \sqsubseteq ^1$,
in the hierarchy of relations introduced in the
above section.  
Thus if $(P, \vee, K)$ is a closure semilattice,
then  $(P, \leq, \sqsubseteq )$ is a specialization semilattice. 
From the latter structure we can retrieve the original closure $K$: indeed,
$Kx$ is the $\leq$-largest element $y$ such that $ y \sqsubseteq x$.  
However, 
such a largest element does not necessarily exist
  in an arbitrary specialization semilattice: consider, for example
the specialization given by inclusion modulo finite.
See \cite{mtt,mttlib,mttna} for more details. 
If the specialization semilattice $\mathbf S$  is such that, for every $x \in S $,
there exists the  largest element $y \in S$ such that $ y \sqsubseteq x$,
then $\mathbf S$ is said to be \emph{principal}. 
Such an $y$  shall be denoted
by $Kx$, as well.

\begin{remark} \labbel{psc} 
(a) It can be shown
that there is a bijective correspondence between closure semilattices and
principal specialization semilattices. See \cite[Section 3.1]{E}, in particular,
\cite[Proposition 3.9]{E} for details.

(b) If $a$ and $b$ are elements of  some specialization semilattice 
and both $Ka$ and  $Kb$ exist, then
$Ka \leq Kb$ if and only if $ a \sqsubseteq b$.
See \cite[Remark 2.1(c)]{mttlib} for a proof.
\end{remark}

The notions of \emph{homomorphism} 
and \emph{embedding}  are always intended
in the classical model-theoretical sense \cite{H}.
In detail, a homomorphism
for the relation $ \sqsubseteq ^n$ 
is a function $\varphi$
such that $ a \sqsubseteq^n b_1, \dots, b_n$ 
implies $\varphi(a) \sqsubseteq^n  \varphi(b_1),  \dots,
\varphi(b_n)$.
An embedding is an injective homomorphism such that 
also the converse holds.
A homomorphism of closure semilattices 
is a semilattice homomorphism satisfying
$ \varphi (Ka)= K \varphi (a)$. 

Homomorphisms of specialization semilattices
do not necessarily preserve closures.
If $\varphi$ is a homomorphism between two \emph{principal}
specialization semilattices, then $\varphi$ is a \emph{$K$-homomorphism} 
if  $ \varphi (Ka)= K \varphi (a)$, for every $a$. Thus $K$-homomorphisms
are actually homomorphisms for the associated
closure semilattices.

Other unexplained notions and notation
are from \cite{mtt,mttlib,mttna}.

\section{Multi-argument specialization semilattices} \labbel{msec} 

\begin{definition} \labbel{masl}    
A join semilattice $\mathbf M$ with a further
family of relations $(R_n) _{n \geq 1} $ 
is a \emph{multi-argument specialization semilattice} if  
the following conditions hold
\begin{align} \labbel{m1} \tag{M1}    
& a \sqsubseteq a
\\ \labbel{m2}   \tag{M2}    
&a \sqsubseteq  b_1, b_2, \dots, b_n \  \& \ 
b_1 \sqsubseteq  c
\Rightarrow  a \sqsubseteq c, b_2, \dots, b_n  
\\ \labbel{m3}   \tag{M3}
& a \leq b  \  \& \   b \sqsubseteq c_1, \dots, c_m
\Rightarrow
a  \sqsubseteq c_1, \dots, c_m
\\ \labbel{m4}   \tag{M4}    
&a \sqsubseteq  b_1, \dots, b_n \Rightarrow  
a \sqsubseteq  b_{ \sigma 1}, \dots, b_{ \sigma n}, 
\\ &\nonumber \qquad\qquad\qquad
 \text{ for every permutation $\sigma$ of $\{ 1, 2, \dots, n\}$} 
\\ \labbel{m5}   \tag{M5}    
&a \sqsubseteq  b_1, \dots, b_n,  b_n, 
\Rightarrow  a \sqsubseteq   b_1, \dots, b_n
\\ \labbel{m6}   \tag{M6}    
&a \sqsubseteq  b_1, \dots, b_n 
\Rightarrow  a \sqsubseteq  b_1, \dots, b_n, b_{n+1} 
\\ \labbel{m7}   \tag{M7}    
&a \sqsubseteq  b_1, \dots, b_n \  \& \ 
a_1 \sqsubseteq  b_1, \dots, b_n
\Rightarrow  a \vee a_1 \sqsubseteq  b_1, \dots, b_n  
    \end{align} 
for every $a,a_1,b,b_1, \dots, c \in M$ and where, as mentioned in the introduction,
$ a  \sqsubseteq  b_1, \dots, b_n$ is a shorthand for 
$ R_n(a, b_1, \dots, b_n)$.
Thus, formally, a multi-argument specialization semilattice 
is a structure of the form $(\mathbf M, \vee, R_n) _{n \geq 1} $.
For simplicity, we shall write
$(\mathbf M, \vee, \sqsubseteq^n )  _{n \geq 1}$. 
\end{definition}

We first derive some consequences from \eqref{m1} - \eqref{m7}.

From \eqref{m1} and \eqref{m3} with 
$m=1$ and $b=c_1$ we get    
\begin{align} \labbel{m1+} \tag{M1+}    
&a \leq b \Rightarrow a \sqsubseteq b.
    \end{align}

By taking $n=1$ in \eqref{m2} we get
\begin{align} \labbel{m2-}    \tag{M2$-$}
&a \sqsubseteq  b \  \& \ 
b \sqsubseteq  c
\Rightarrow  a \sqsubseteq c, 
 \end{align}   
and then using \eqref{m1+} we get
\begin{align} \labbel{m2*}    \tag{M2*}
&a \sqsubseteq  b \  \& \ 
b \leq  c
\Rightarrow  a \sqsubseteq c, 
 \end{align}
   
By iterating \eqref{m2} and using \eqref{m4},
we get  
\begin{align} \labbel{m2+} \tag{M2+}    
&a \sqsubseteq  b_1, b_2, \dots, b_n \  \& \ 
b_1 \sqsubseteq  c_1\  \& \ \dots \  \& \ 
b_n \sqsubseteq  c_n
\Rightarrow  a \sqsubseteq c_1, c_2, \dots, c_n  
 \end{align}
 
From $c_i \sqsubseteq c_i$
given by \eqref{m1}  and
iterating \eqref{m4} and \eqref{m6}, we get  
$ c_i  \sqsubseteq  c_1, \dots, c_m$, for every 
$i \leq m$. 
Then, by iterating \eqref{m7},
we get  
\begin{equation}\labbel{pupb}    
  c_1 \vee  \dots \vee  c_m  \sqsubseteq  c_1, \dots, c_m.
 \end{equation} 
If $ a \leq c_1 \vee  \dots \vee  c_m$,
then from \eqref{pupb} and 
 \eqref{m3} with $b=c_1 \vee  \dots \vee  c_m$
  we get   $a \sqsubseteq  c_1, \dots, c_m$, henceforth 
\begin{align} \labbel{m8} \tag{M8}    
&a \leq c_1 \vee \dots \vee c_m \Rightarrow 
a \sqsubseteq  c_1, \dots, c_m.
 \end{align} 

Since $c_i  \sqsubseteq c_1 \vee \dots \vee c_m $, for every 
$i \leq m$, by  \eqref{m1+},  if   $a \sqsubseteq  c_1, \dots, c_m$,
then we can repeatedly apply \eqref{m2} and \eqref{m5}
(using \eqref{m4})
in order to get   $a \sqsubseteq   c_1 \vee \dots \vee c_m$.    
Thus
\begin{align} \labbel{m9}  \tag{M9}
a \sqsubseteq  c_1, \dots, c_m \Rightarrow
a \sqsubseteq   c_1 \vee \dots \vee c_m.
 \end{align} 

By \eqref{m4} - \eqref{m5}, the relation
$a \sqsubseteq b_1, \dots, b_n$ 
depends only on the set
 $B=\{ b_1, \dots, b_n\}$, rather  
than on the sequence $(b_1, \dots, b_n)$,
henceforth we could have written
$a \sqsubseteq \{ b_1, \dots, b_n\}$ 
or even $a \sqsubseteq B $ in place of 
$a \sqsubseteq b_1, \dots, b_n$.
 
By iterating \eqref{m6} we thus get 
\begin{align} \labbel{m6+} \tag{M6+}    
&a \sqsubseteq B  \ \& \  B \subseteq  B_1  \Rightarrow 
a \sqsubseteq  B_1.
 \end{align}

\begin{definition} \labbel{reg}
Recall the definition of a principal specialization semilattice 
from Section \ref{prel}. The notion of being principal
involves only the binary relation $ \sqsubseteq ^1$. 
On the other hand, the following definition of regularity
involves all the relations $ \sqsubseteq ^n$. 

A principal multi-argument specialization semilattice 
is \emph{regular} when  
$a \sqsubseteq  b_1, \dots, b_n$
holds  if and only if 
$a \leq  Kb_1 \vee \dots \vee Kb_n$.

Notice that, by  \eqref{m2+},
the ``if'' condition holds in every principal
multi-argument specialization semilattice.
 \end{definition}   

The following proposition is evident.
It asserts that the axioms \eqref{m1} - \eqref{m7} 
are true in the intended model and, 
moreover, the notion of homomorphism is preserved.

\begin{proposition} \labbel{vere} 
(1) Suppose that $\mathbf S$ is a closure semilattice and set 
$a \sqsubseteq  b_1, \dots, b_n$ if 
$a \leq K b_1 \vee \dots \vee K b_n$. Then
$S$ acquires the structure of 
a principal regular multi-argument specialization semilattice,
which we shall call the \emph{associated}, or
the  \emph{multi-argument specialization reduct} of $\mathbf S$.  

In particular, the above statements apply
when $\mathbf S$ is a closure space.

(2) If $\mathbf T$ is another closure semilattice and $\varphi: \mathbf S
\to \mathbf T$ is a homomorphism of closure semilattices, then 
$\varphi$  is a homomorphism between the    associated 
multi-argument specialization semilattices.
\end{proposition}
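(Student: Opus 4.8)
The plan is simply to check, clause by clause, that the relations defined by $a \sqsubseteq b_1, \dots, b_n$ iff $a \leq Kb_1 \vee \dots \vee Kb_n$ satisfy \eqref{m1}--\eqref{m7}, using only that $K$ is extensive, idempotent and isotone and that $\vee$ computes joins. For \eqref{m1} one uses extensivity: $a \leq Ka$. Axioms \eqref{m3}, \eqref{m5}, \eqref{m6}, \eqref{m7} need nothing about $K$ at all: \eqref{m3} is transitivity of $\leq$; \eqref{m5} holds because $Kb_n \vee Kb_n = Kb_n$; \eqref{m6} because $Kb_1 \vee \dots \vee Kb_n \leq Kb_1 \vee \dots \vee Kb_n \vee Kb_{n+1}$; and \eqref{m7} because $a \vee a_1$ is the least upper bound of $a$ and $a_1$. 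Axiom \eqref{m4} is immediate since $\vee$ is commutative and associative, so the right-hand side $Kb_1 \vee \dots \vee Kb_n$ does not depend on the order of the $b_i$.

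The one clause that genuinely uses the closure axioms is \eqref{m2}. Here, from $b_1 \sqsubseteq c$, i.e. $b_1 \leq Kc$, isotonicity gives $Kb_1 \leq KKc$ and idempotency gives $KKc = Kc$, hence $Kb_1 \leq Kc$; combining with $a \leq Kb_1 \vee Kb_2 \vee \dots \vee Kb_n$ yields $a \leq Kc \vee Kb_2 \vee \dots \vee Kb_n$, which is exactly $a \sqsubseteq c, b_2, \dots, b_n$. This is the only place where one must be mildly careful about the order in which the closure properties are invoked, and it is hardly an obstacle: the proposition is, as asserted, essentially evident.

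For the remaining assertions of (1): the structure is principal because, for each $x$, the element $Kx$ is trivially the $\leq$-largest $y$ with $y \leq Kx$, that is, the largest $y$ with $y \sqsubseteq^1 x$; in particular the closure operation one recovers from $\sqsubseteq^1$ is again $K$. Regularity is then nothing but the defining clause $a \sqsubseteq b_1, \dots, b_n \Leftrightarrow a \leq Kb_1 \vee \dots \vee Kb_n$ read with this recovered closure. The statement for closure spaces follows at once, since $(\mathcal{P}(X), \cup)$ with set-theoretic closure $K$ is a closure semilattice.

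For (2): a homomorphism $\varphi$ of closure semilattices is in particular a semilattice homomorphism, hence monotone, and satisfies $\varphi(Ka) = K\varphi(a)$ for every $a$. So if $a \sqsubseteq b_1, \dots, b_n$ in $\mathbf{S}$, then $a \leq Kb_1 \vee \dots \vee Kb_n$, whence $\varphi(a) \leq \varphi(Kb_1 \vee \dots \vee Kb_n) = \varphi(Kb_1) \vee \dots \vee \varphi(Kb_n) = K\varphi(b_1) \vee \dots \vee K\varphi(b_n)$, that is, $\varphi(a) \sqsubseteq \varphi(b_1), \dots, \varphi(b_n)$ in $\mathbf{T}$. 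Since $\varphi$ also preserves $\vee$ by hypothesis, it is a homomorphism of the associated multi-argument specialization semilattices, which completes the plan.
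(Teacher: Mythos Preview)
Your verification is correct and is exactly the straightforward check the paper has in mind; indeed the paper declares the proposition ``evident'' and gives no proof, so your clause-by-clause unpacking of \eqref{m1}--\eqref{m7}, principality, regularity, and the homomorphism condition is simply an explicit rendering of that evidence.
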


\section{Free principal extensions} \labbel{secimp} 

As in \cite{mttna}, the existence of the ``universal''
extensions and morphisms which  we are going to construct
 follows from abstract categorical arguments;
see, e.~g., \cite[Lemma 4.1]{mttlib}.
Needles to say, an explicit description of
such universal objects is sometimes very hard to find.

\begin{remark} \labbel{rough}
As in \cite{mttna}, we work on the ``free''
semilattice extension  $ \widetilde { \mathbf   M} $
 of $\mathbf M$ obtained by adding
a  set of new elements
$\{ \, Ka \mid a \in M \,\}$.
The required  conditions here are 
$a \leq Ka$ and 
\begin{equation*}
\text{$a  \leq  Kb_1 \vee \dots \vee Kb_n$,
 whenever $a \sqsubseteq_{_{ \mathbf   M}} b_1, \dots, b_n$}
\end{equation*} 
Compare Definition \ref{reg}.
We shall denote the ``old'' relations and operations 
 with the subscript of the parent structure; the
``new'' ones
will be unsubscripted.
We want to define $ \sqsubseteq $ 
 in $ \widetilde { \mathbf   M} $
in such a way that the new element $Ka$
is actually the closure of $a$ and, moreover,
we want the resulting multi-argument specialization semilattice
to be regular. 
If we want 
that the above conditions  hold,
 then  we necessarily have
  \begin{enumerate}[$\bullet$]
    \item  
$a \leq c \vee Kd_1 \vee \dots \vee Kd_k$, whenever 
there is an  elements $d \sqsubseteq_{_{ \mathbf   M}} d_1,
 \dots , d_k$ 
 in $\mathbf M$ such that 
$a \leq_{_{ \mathbf   M}} c \vee_{_{ \mathbf   M}}
 d$, and 
\item
$Kb \leq c \vee Kd_1 \vee \dots \vee Kd_k$, if
there is $j \leq k$ such that  
   $b \sqsubseteq_{_{ \mathbf   M}} b_j$.
  \end{enumerate}
Such considerations justify  clauses
(a) - (b) in Definition \ref{und2} below.
Compare Remark \ref{*} below.

The  construction hinted above adds
a new closure of $a$ in the extension, even when $a$ has already
a closure in $\mathbf M$. 
In a parallel situation in \cite[Section 4]{mttna}
we have constructed an extension which preserves 
a specified set of closures,
but we shall not pursue an analogue
construction here.  
\end{remark}   
 
If $M$
is any set,  
let $\mathbf M^{{<} \omega }$
be the semilattice 
of the finite subsets of $M$,
with the operation of union.

\begin{definition} \labbel{und2}   
Suppose that 
$\mathbf M=
(M, \vee_{_{ \mathbf   M}}, \sqsubseteq ^n_{_{ \mathbf   M}}) _{n \geq 1}$
 is a multi-argument specialization semilattice.
On the product
$ M \times  M^{{<} \omega }$ 
define the following relations
  \begin{enumerate}[(a)] 
\item
 $(a, \{b_1, b_2, \dots, b_h\}) \precsim 
(c,\{d_1, d_2, \dots, d_k\})$ if
   \begin{enumerate}[({a}1)]  
  \item 
there is an  elements $d \sqsubseteq_{_{ \mathbf   M}} d_1,
 \dots , d_k$ 
 in $M$ such  that
 $a \leq_{_{ \mathbf   M}} c
 \vee_{_{ \mathbf   M}} d$
(if $k=0$, that is, if $\{d_1, d_2, \dots, d_k\}$ is empty,
the clause simply reads  $a \leq_{_{ \mathbf   M}} c $), and
   \item 
for every $i \leq h$,   
there is $j \leq k$ such that 
 $b_i \sqsubseteq_{_{ \mathbf   M}} d_j$.
 \end{enumerate}
\item
 $(a, \{b_1, b_2, \dots, b_h\}) \sim 
(c,\{d_1, d_2, \dots, d_k\})$ if both

 $(a, \{b_1, b_2, \dots, b_h\}) \precsim 
(c,\{d_1, d_2, \dots, d_k\})$ and

 $(c,\{d_1, d_2, \dots, d_k\}) \precsim 
(a, \{b_1, b_2, \dots, b_h\}) $.
  \end{enumerate}

In Lemma \ref{corre2} we shall prove 
that $\sim$ is an equivalence relation.   

Let 
$ \widetilde {  M} $
be the quotient of  
$ M \times M^{{<} \omega }$ 
under the equivalence relation $\sim$.

Define $K: \widetilde {   M} \to \widetilde {   M}$
by
\begin{equation*}   
K[a, \{b_1, \dots, b_h\}] =
 [a, \{a \vee_{_{ \mathbf   M}} b_1 \vee_{_{ \mathbf   M}}
 \dots \vee_{_{ \mathbf   M}} b_h  \} ],
  \end{equation*}    
where $[a, \{b_1,  \dots, b_h\}]$ 
denotes the $ \sim$-class of the pair $(a, \{b_1,  \dots, b_h\})$.

In Lemma \ref{corre2} we shall  prove that $K$ is well-defined
and that  $\widetilde {   M}$ naturally inherits
a semilattice operation $  \vee$ from the semilattice product
$( M, \vee_{_{ \mathbf   M}}) \times \mathbf M^{{<} \omega }$.

For every $n \geq 1$, define $ \sqsubseteq^n $ 
(henceforth denoted simply $ \sqsubseteq $ 
when no risk of ambiguity occurs)
on   $\widetilde {   M}$ by
 \begin{align*} \labbel{df}
 &[a, \{b_1,  \dots, b_h\}] \sqsubseteq^n
[c_1, \{d_{1,1},  \dots, d_{1,k_1}\}], \dots,
[c_n, \{d_{n,1},  \dots, d_{n,k_n}\}]
\text{ if}  
\\ 
& [a, \{b_1,  \dots, b_h\}]  \leq
K[c_1, \{d_{1,1},  \dots, d_{1,k_1}\}] \vee \dots \vee
K[c_n, \{d_{n,1},  \dots, d_{n,k_n}\}],
 \end{align*} 
 where $\leq$ 
is the order induced by $   \vee$.

Let $\widetilde {   \mathbf M} = (\widetilde { M},   \vee, \sqsubseteq ^n)  _{n \geq 1} $,
$\widetilde {   \mathbf M}' = (\widetilde { M},   \vee, K) $.
Finally, define
$\upsilon_{_\mathbf M} :  M \to \widetilde { M}$ 
by 
$\upsilon_{_\mathbf M} (a)= [a,  \emptyset ]$.
\end{definition}

\begin{remark} \labbel{*} 
We  think
of  $[a, \{b_1,  \dots, b_h\}]$ as $a \vee Kb_1 \vee \dots \vee  Kb_h$,
 where $Kb_1, \dots , Kb_h$ 
are the ``new''  closures we need to introduce.
Compare Remark \ref{rough}.
In particular,  $[a, \emptyset ]$ corresponds to $a$
and $[b, \{ b \}]$ corresponds to a  new element $Kb$.  
 \end{remark}   

Now we can state and prove some results similar
to \cite{mttlib,mttna}. The statements are essentially the same,
while the proofs differ in many places.
The fact that the proofs here and in \cite{mttlib}
are quite simpler than the proofs in \cite{mttna}
confirm the idea that 
 specialization semilattices 
(multi-argument specialization semilattices, respectively) are the ``right''
framework for dealing with subreducts of  topological spaces
(closure spaces, respectively).  
On the other hand, while it is still true that
every specialization semilattice is a subreduct of some closure space,
the fact that the universal extension constructed in 
\cite{mttna} is much more involved suggests
that the correspondence between specialization semilattices
and closure spaces is less natural. We
shall provide further arguments in favor of this thesis
in a version of \cite{mtt} under revision.  

\begin{theorem} \labbel{univt2}
Suppose that  $\mathbf M$ is a 
multi-argument specialization semilattice
and let $ \widetilde {  \mathbf M}$
and $\upsilon_{_\mathbf M} $
  be  as in Definition \ref{und2}.
Then the following statements hold.
  \begin{enumerate}   
 \item 
$ \widetilde {  \mathbf M}$ is
a principal regular
multi-argument specialization semilattice.  
\item  
$\upsilon_{_\mathbf M} $ is an
 embedding of $\mathbf M$ 
 into $\widetilde {  \mathbf M}$.
\item  
The pair $ (\widetilde {  \mathbf M}, \upsilon_{_\mathbf M})$
has the following universal property.

For every  principal regular multi-argument 
 specialization semilattice
$\mathbf  T$  
 and every homomorphism
 $ \eta : \mathbf M \to \mathbf  T$, there
is a unique  $K$-{\hspace{0 pt}}homomorphism  
 $ \widetilde{\eta} : \widetilde{\mathbf M} \to \mathbf  T$
such that
$\eta = \upsilon_{_\mathbf M} \circ \widetilde{\eta}$. 
\begin{equation*}
\xymatrix{
	{\mathbf M}  \ar[rd]_{\eta}
 \ar[r]^{\upsilon_{_\mathbf M}}
 &\widetilde{\mathbf  M} \ar@{-->}[d]^{\widetilde{\eta}}\\
	 &{\mathbf  T}}
   \end{equation*}    

\item
Suppose that  $\mathbf U$
is another multi-argument specialization semilattice 
and $\psi : \mathbf M \to \mathbf U$ 
is a 
homomorphism. Then 
 there is a unique
 $K$-\hspace{0 pt}homomorphism
$\widetilde{\psi}: \widetilde{\mathbf M} \to \widetilde{\mathbf U}$
making the following diagram commute:
\begin{equation*}
\xymatrix{
	{\mathbf M} \ar[d]_{\psi} \ar[r]^{\upsilon_{_\mathbf M}}
 &\widetilde{\mathbf M} \ar@{-->}[d]^{\widetilde{\psi}}\\
	{\mathbf U} \ar[r]^{\upsilon_{_\mathbf U}}
 &{\widetilde{\mathbf U}}}
   \end{equation*}    
 \end{enumerate} 
 \end{theorem}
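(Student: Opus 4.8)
The plan is to derive all four statements from a single structural fact --- that $K$ makes $\widetilde{\mathbf M}'=(\widetilde M,\vee,K)$ into a \emph{closure} semilattice --- together with a forced-by-hand description of the extension of a morphism. I would first record the bookkeeping facts, most of which are part of (or immediate from) Lemma~\ref{corre2}: the inherited join is $[a,B]\vee[c,D]=[a\vee_{_{\mathbf M}}c,\,B\cup D]$; using \eqref{m1}, \eqref{m6+} and \eqref{m8} one gets the key translation $[a,B]\le[c,D]\iff(a,B)\precsim(c,D)$ (because $(c,D)\precsim(a\vee_{_{\mathbf M}}c,\,B\cup D)$ always holds, and $(a\vee_{_{\mathbf M}}c,B\cup D)\precsim(c,D)\iff(a,B)\precsim(c,D)$); moreover $K[a,\emptyset]=[a,\{a\}]$ and $[a,B]=[a,\emptyset]\vee\bigvee_{b\in B}K[b,\emptyset]$. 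For part (1) it then suffices to check that $K$ is a closure operation on $\widetilde{\mathbf M}'$: extensivity and idempotency read off the defining formula for $K$, while for isotonicity, assuming $(a,B)\precsim(c,D)$, I would produce a witness for $(a,\{a\vee_{_{\mathbf M}}\bigvee B\})\precsim(c,\{c\vee_{_{\mathbf M}}\bigvee D\})$ by pushing the clause-(a1) witness and the clause-(a2) witnesses down to the single element $c\vee_{_{\mathbf M}}\bigvee D$ via \eqref{m9} and \eqref{m2*}, then recombining them by \eqref{m7}, \eqref{m8} and \eqref{m3}. Once $\widetilde{\mathbf M}'$ is a closure semilattice, statement (1) is exactly Proposition~\ref{vere}(1), since the relations $\sqsubseteq^n$ of Definition~\ref{und2} are precisely those of the multi-argument specialization reduct of $\widetilde{\mathbf M}'$; principality and regularity then come for free.

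For part (2), $\upsilon_{_{\mathbf M}}$ is injective because $(a,\emptyset)\sim(a',\emptyset)$ forces $a\le_{_{\mathbf M}}a'\le_{_{\mathbf M}}a$, and it preserves $\vee$ by the join formula. Unwinding the definitions, $\upsilon_{_{\mathbf M}}(a)\sqsubseteq^n\upsilon_{_{\mathbf M}}(b_1),\dots,\upsilon_{_{\mathbf M}}(b_n)$ amounts to $(a,\emptyset)\precsim(b_1\vee\dots\vee b_n,\{b_1,\dots,b_n\})$, which I would show equivalent to $a\sqsubseteq^n_{_{\mathbf M}}b_1,\dots,b_n$: the forward direction using \eqref{m8}, \eqref{m7} and \eqref{m3}, the backward one by taking $d:=a$ as the clause-(a1) witness. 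Hence $\upsilon_{_{\mathbf M}}$ is an embedding (in particular for $n=1$, of specialization semilattices).

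For part (3), uniqueness is forced: any $K$-homomorphism $\theta$ with $\theta(\upsilon_{_{\mathbf M}}(a))=\eta(a)$ must send $[a,\emptyset]\mapsto\eta(a)$ and therefore $[b,\{b\}]=K[b,\emptyset]\mapsto K_{_{\mathbf T}}\eta(b)$ (here $K_{_{\mathbf T}}$ exists since $\mathbf T$ is principal), so by the decomposition $[a,B]=[a,\emptyset]\vee\bigvee_{b\in B}K[b,\emptyset]$ it must equal $\widetilde\eta[a,B]:=\eta(a)\vee\bigvee_{b\in B}K_{_{\mathbf T}}\eta(b)$. For existence the crux is well-definedness: if $(a,B)\precsim(c,D)$, I would apply the homomorphism $\eta$ to the clause-(a1) and clause-(a2) witnesses and then invoke \emph{regularity of $\mathbf T$} (Definition~\ref{reg}) together with Remark~\ref{psc}(b) to convert the resulting $\sqsubseteq$-facts in $\mathbf T$ into the inequality $\eta(a)\vee\bigvee_{b\in B}K_{_{\mathbf T}}\eta(b)\le\eta(c)\vee\bigvee_{d\in D}K_{_{\mathbf T}}\eta(d)$; by symmetry of $\sim$, $\widetilde\eta$ is well-defined. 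It is a semilattice homomorphism by the join formula; it is a $K$-homomorphism because the identity $K(x\vee Ky)=K(x\vee y)$, valid in every closure semilattice, iterated, yields $K_{_{\mathbf T}}\widetilde\eta[a,B]=K_{_{\mathbf T}}\bigl(\eta(a)\vee\bigvee_{b\in B}\eta(b)\bigr)=\widetilde\eta(K[a,B])$; and, being $\vee$- and $K$-preserving between two principal regular multi-argument specialization semilattices, it automatically preserves each $\sqsubseteq^n$ (pass from $\sqsubseteq^n$ to $\le$ by regularity of $\widetilde{\mathbf M}$, apply the monotone map $\widetilde\eta$, pass back by regularity of $\mathbf T$). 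Finally $\widetilde\eta(\upsilon_{_{\mathbf M}}(a))=\eta(a)$ by inspection.

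Part (4) is then formal: apply part (3) to the homomorphism $\upsilon_{_{\mathbf U}}\circ\psi:\mathbf M\to\widetilde{\mathbf U}$ --- a homomorphism because $\psi$ is one and $\upsilon_{_{\mathbf U}}$ is an embedding by part (2), and $\widetilde{\mathbf U}$ is principal regular by part (1) --- and take $\widetilde\psi$ to be the resulting $K$-homomorphism; both the commutativity of the square and the uniqueness of $\widetilde\psi$ are exactly what part (3) supplies. I expect the only genuine difficulty to be computational: the constant passage between the preorder $\precsim$ on $M\times M^{{<}\omega}$ and the order of $\widetilde{\mathbf M}$, and the disciplined use of the derived rules \eqref{m2*}, \eqref{m7}, \eqref{m8}, \eqref{m9}. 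This difficulty is concentrated in the isotonicity of $K$ in part (1) and in the well-definedness of $\widetilde\eta$ in part (3); after those two verifications, Proposition~\ref{vere}(1) carries the structural content and the rest is routine.
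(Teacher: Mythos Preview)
Your proposal is correct and follows essentially the same route as the paper: establish that $(\widetilde M,\vee,K)$ is a closure semilattice (your isotonicity argument is exactly the computation \eqref{mum} that the paper carries out inside Lemma~\ref{corre2}(ii)), then read off (1) from Proposition~\ref{vere}; for (3), force the formula \eqref{nec}, verify well-definedness via regularity of $\mathbf T$ and Remark~\ref{psc}(b), and check the $K$- and $\sqsubseteq^n$-preservation; and derive (4) from (3) applied to $\upsilon_{_{\mathbf U}}\circ\psi$. The only differences are expository: you spell out the embedding check in (2) and the $K$-homomorphism identity $K(x\vee Ky)=K(x\vee y)$ in (3), whereas the paper defers these to \cite{mttna}.
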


\begin{lemma} \labbel{corre2}
Assume the notation and the definitions in \ref{und2}. 
  \begin{enumerate}[(i)]    
\item
 The relation $\precsim$ from Definition \ref{und2}(a)
is reflexive and transitive on 
$ M \times  M^{{<} \omega }$, hence
 $ \sim$ from \ref{und2}(b) is an equivalence relation. 
\item
The operation $K$ is
 well-defined on the $ \sim$-equivalence classes.
\item
The relation $ \sim$ is a semilattice congruence
on the semilattice  $ ( M, \vee) \times \mathbf M^{{<} \omega }$,
hence the quotient $\widetilde{M}$
inherits  a semilattice structure from
the semilattice $( M, \vee) \times \mathbf M^{{<} \omega }$.
The join operation on the quotient is given by
\begin{equation} \labbel{3-1}
[a, \{b_1,  \dots, b_h\}]  \vee [c, \{d_1,  \dots, d_k\}] =
[a \vee _{_{ \mathbf   M}}  c ,  \{b_1,  \dots, b_h, d_1,  \dots, d_k \}], 
 \end{equation}    
and, moreover, the following holds:
\begin{equation}\labbel{30}     
\begin{aligned} ~     
[a, \{b_1,  \dots, b_h\}] & \leq [c, \{d_1,  \dots, d_k\}] 
\text{ if and only if}  
\\
(a, \{b_1,  \dots, b_h\}) & \precsim (c, \{d_1,  \dots, d_k\}).
 \end{aligned}
 \end{equation}    
 \end{enumerate} 
 \end{lemma}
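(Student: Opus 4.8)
The plan is to verify the three parts in order, since each relies on the previous one. For part (i), reflexivity of $\precsim$ is immediate: given $(a,\{b_1,\dots,b_h\})$, condition (a1) holds with $d$ any element satisfying $d \sqsubseteq_{_{\mathbf M}} b_1,\dots,b_h$ that also satisfies $a \leq_{_{\mathbf M}} a \vee_{_{\mathbf M}} d$ — in fact simply take $d = b_1 \vee \dots \vee b_h$ (or handle $h=0$ separately, using $a \leq_{_{\mathbf M}} a$), and (a2) holds by \eqref{m1} and \eqref{m1+}. For transitivity, suppose $(a,B) \precsim (c,D) \precsim (e,F)$ where $B=\{b_1,\dots\}$ etc. From (a1) for the first relation we get $d \sqsubseteq_{_{\mathbf M}} D$ with $a \leq c \vee d$; from (a1) for the second we get $d' \sqsubseteq_{_{\mathbf M}} F$ with $c \leq e \vee d'$. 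We need a witness for $a \leq e \vee (\text{something} \sqsubseteq_{_{\mathbf M}} F)$. The key move is: since $d \sqsubseteq_{_{\mathbf M}} D$ and each element of $D$ is $\sqsubseteq_{_{\mathbf M}} F$ (by (a2) of the second relation, after rewriting $\sqsubseteq^n$ in terms of a set), transitivity \eqref{m2+} / \eqref{m6+} gives $d \sqsubseteq_{_{\mathbf M}} F$; then $d' \vee d \sqsubseteq_{_{\mathbf M}} F$ by \eqref{m7} and $a \leq c \vee d \leq e \vee d' \vee d$, so $d' \vee d$ is the required witness. Condition (a2) for $(a,B)\precsim(e,F)$ follows similarly: each $b_i \sqsubseteq_{_{\mathbf M}} d_j$ for some $j$, and $d_j \sqsubseteq_{_{\mathbf M}} F$, so \eqref{m2-} (i.e.\ \eqref{m2} with $n=1$) gives $b_i \sqsubseteq_{_{\mathbf M}} (\text{an element of } F)$ after one more use of \eqref{m2} to collapse; more precisely, $b_i \sqsubseteq_{_{\mathbf M}} d_j$ and $d_j \sqsubseteq_{_{\mathbf M}} f_1,\dots$ yield $b_i \sqsubseteq_{_{\mathbf M}} f_1,\dots$ and then we may need to pick a single $f$ — but note (a2) of the target only requires $b_i \sqsubseteq_{_{\mathbf M}} f$ for a single $f \in F$, so here I would instead use \eqref{m9} to replace $F$-as-a-set arguments by their join and observe the definition of $\precsim$ is unchanged if in (a2) we read $b_i \sqsubseteq_{_{\mathbf M}} \bigvee F$; I will record this reformulation at the start of the proof to streamline everything. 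Once $\precsim$ is a preorder, $\sim$ is an equivalence by the standard symmetrization argument.

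For part (ii), I must show that if $(a,B)\sim(c,D)$ then $(a, \{a \vee \bigvee B\}) \sim (c, \{c \vee \bigvee D\})$, writing $\bigvee B = b_1 \vee_{_{\mathbf M}} \dots \vee_{_{\mathbf M}} b_h$. By symmetry it suffices to prove one direction of $\precsim$. So assume $(a,B)\precsim(c,D)$ and $(c,D)\precsim(a,B)$; I want $(a,\{a\vee\bigvee B\})\precsim(c,\{c\vee\bigvee D\})$, i.e.\ (a1) there is $d \sqsubseteq_{_{\mathbf M}} c\vee\bigvee D$ with $a \leq c \vee d$, and (a2) $a \vee \bigvee B \sqsubseteq_{_{\mathbf M}} c \vee \bigvee D$. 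For (a1): from $(a,B)\precsim(c,D)$ we have $d_0 \sqsubseteq_{_{\mathbf M}} D$, hence $d_0 \sqsubseteq_{_{\mathbf M}} \bigvee D \leq c \vee \bigvee D$ by \eqref{m9} and \eqref{m2*}, with $a \leq c \vee d_0$; take $d = d_0$. For (a2): I need $a \sqsubseteq_{_{\mathbf M}} c \vee \bigvee D$ and $b_i \sqsubseteq_{_{\mathbf M}} c \vee \bigvee D$ for each $i$, then \eqref{m7}/\eqref{m8}-style combination gives the join. Now $a \leq c \vee d_0$ with $d_0 \sqsubseteq_{_{\mathbf M}} \bigvee D$, and $c \leq c$, so $a \sqsubseteq_{_{\mathbf M}} c \vee \bigvee D$ — this needs: $c \sqsubseteq_{_{\mathbf M}} c \vee \bigvee D$ (from \eqref{m1+}), $d_0 \sqsubseteq_{_{\mathbf M}} c \vee \bigvee D$ (just shown), hence $c \vee d_0 \sqsubseteq_{_{\mathbf M}} c\vee\bigvee D$ by \eqref{m7}, then $a \leq c \vee d_0$ and \eqref{m3} finish it. For $b_i$: from (a2) of $(a,B)\precsim(c,D)$, $b_i \sqsubseteq_{_{\mathbf M}} d_j$ for some $j$, and $d_j \leq \bigvee D \leq c \vee \bigvee D$, so \eqref{m2*} gives $b_i \sqsubseteq_{_{\mathbf M}} c \vee \bigvee D$. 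Combining, $a \vee \bigvee B \sqsubseteq_{_{\mathbf M}} c \vee \bigvee D$ by iterated \eqref{m7}. That is exactly (a2) (as a one-element set on the right, which is allowed).

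For part (iii), I first check $\sim$ is a congruence for the product join $(a,B)\vee(c,D) = (a\vee_{_{\mathbf M}}c, B\cup D)$. It suffices to show $(a,B)\sim(a',B')$ implies $(a,B)\vee(c,D) \sim (a',B')\vee(c,D)$, i.e.\ $(a\vee c, B\cup D)\sim(a'\vee c, B'\cup D)$; by symmetry show $\precsim$. From $(a,B)\precsim(a',B')$: (a1) gives $e \sqsubseteq_{_{\mathbf M}} B'$ with $a \leq a'\vee e$; then $e \sqsubseteq_{_{\mathbf M}} B'\cup D$ by \eqref{m6+}, and $a\vee c \leq a'\vee c\vee e$, so (a1) holds for the joined pairs. (a2): each element of $B\cup D$ is either in $B$ (so $\sqsubseteq_{_{\mathbf M}}$ some element of $B' \subseteq B'\cup D$) or in $D$ (so $\sqsubseteq_{_{\mathbf M}}$ itself, which lies in $B'\cup D$, by \eqref{m1}); done. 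Hence the quotient $\widetilde M$ is a semilattice and the join formula \eqref{3-1} holds by definition of the quotient operation. Finally \eqref{30}: $[a,B]\leq[c,D]$ means $[a,B]\vee[c,D] = [c,D]$, i.e.\ by \eqref{3-1}, $(a\vee_{_{\mathbf M}}c, B\cup D)\sim(c,D)$; one direction $(c,D)\precsim(a\vee c, B\cup D)$ is automatic (take the witness from reflexivity and note $c \leq a\vee c$, $D \subseteq B\cup D$, using \eqref{m6+} on (a2)), and the other direction $(a\vee c, B\cup D)\precsim(c,D)$ unwinds, via the same manipulations as above, to exactly the conditions (a1) and (a2) defining $(a,B)\precsim(c,D)$ — in particular $a \leq (a\vee c)$ combined with $a\vee c \leq c \vee (\text{witness})$ reduces to $a \leq c\vee d$, and the $B\cup D$ side contributes nothing new since elements of $D$ are $\sqsubseteq_{_{\mathbf M}}$ themselves. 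So $[a,B]\leq[c,D] \iff (a,B)\precsim(c,D)$.

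The main obstacle I anticipate is bookkeeping in part (i) transitivity and part (ii): the witnesses $d$ in clause (a1) interact with clause (a2) through the transitivity axiom \eqref{m2} (and its consequences \eqref{m2+}, \eqref{m6+}), and one must be careful that "$b_i \sqsubseteq_{_{\mathbf M}} d_j$ for some single $j$" in (a2) is the right strength — neither too weak to chain nor so strong it fails to hold. Reformulating (a2) up front as "$b_i \sqsubseteq_{_{\mathbf M}} \bigvee D$" (legitimate by \eqref{m9} together with \eqref{m8}, since $\bigvee D \sqsubseteq_{_{\mathbf M}} D$ by \eqref{pupb}) removes most of the friction, after which every step is a direct application of \eqref{m1}–\eqref{m9}.
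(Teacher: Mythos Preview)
Your overall strategy matches the paper's almost step for step, and parts (ii) and (iii) are handled correctly. The one genuine problem is in the transitivity argument for (a2) in part (i), and in the ``reformulation'' you propose to get around it.

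You write that (a2) of $(c,D)\precsim(e,F)$ gives ``$d_j \sqsubseteq_{_{\mathbf M}} f_1,\dots$'', i.e.\ the multi-argument relation $d_j \sqsubseteq_{_{\mathbf M}} F$. That is a misreading: clause (a2) says that for each $d_j$ there is a \emph{single} index $m$ with $d_j \sqsubseteq_{_{\mathbf M}} f_m$. With the correct reading the chaining is immediate: from $b_i \sqsubseteq_{_{\mathbf M}} d_j$ and $d_j \sqsubseteq_{_{\mathbf M}} f_m$ one gets $b_i \sqsubseteq_{_{\mathbf M}} f_m$ by \eqref{m2-}, which is exactly what (a2) of $(a,B)\precsim(e,F)$ requires. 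No extra manoeuvre is needed; this is what the paper does.

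Your proposed fix, replacing (a2) by ``$b_i \sqsubseteq_{_{\mathbf M}} \bigvee D$'', is \emph{not} a legitimate reformulation. One direction holds (by \eqref{m2*}), but the converse fails: \eqref{m8} only applies when $b_i \leq_{_{\mathbf M}} \bigvee D$, and \eqref{m9} goes the wrong way. The two conditions are genuinely inequivalent, and the weaker version would collapse too much---for instance it would force $[b,\{b_1,b_2\}] \sim [b,\{b_1 \vee_{_{\mathbf M}} b_2\}]$, making $K$ additive in $\widetilde{\mathbf M}$, which is exactly what the construction must avoid in the closure-space (as opposed to topological) setting. So drop the reformulation entirely; once (a2) is read correctly, the friction you anticipated disappears.
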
 

\begin{proof}  
(i) To prove that 
$\precsim$ is reflexive, notice that
if $a=c$, then condition (a1) is verified 
by an arbitrary choice of $d$,
say, $d=b_1$ (notice that (a1) is verified by definition 
if $a=c$ and $\{d_1, d_2, \dots, d_k\}$ is empty).
  
If  $ \{b_1, b_2, \dots, b_h\}= 
\{d_1, d_2, \dots, d_k\}$, then 
condition (a2) is verified by \eqref{m1}.

In order to prove transitivity of $\precsim$,  suppose that
\begin{equation*} 
(a, \{b_1, \dots, b_h\}) \precsim 
(c,\{d_1,  \dots, d_k\})\precsim 
(e,\{f_1, \dots, f_\ell\}).
 \end{equation*}     
By (a1), 
 $a \leq_{_{ \mathbf   M}} c
 \vee_{_{ \mathbf   M}} d$, for some
$d$ such that  
$d \sqsubseteq_{_{ \mathbf   M}} d_1, \dots , d_k$, 
and 
$c \leq_{_{ \mathbf   M}} e
 \vee_{_{ \mathbf   M}} f$, for some
$f$ such that 
$f \sqsubseteq_{_{ \mathbf   M}} f_1, \dots , f_ \ell$. 
Thus 
 $a \leq_{_{ \mathbf   M}} e
 \vee_{_{ \mathbf   M}} d \vee_{_{ \mathbf   M}} f$.    
By (a2), for every 
$j \leq k$, there is $m \leq \ell$
such that $d_j \sqsubseteq_{_{ \mathbf   M}} f_m$,
hence 
$d \sqsubseteq_{_{ \mathbf   M}} f_1, \dots , f_ \ell$,
by \eqref{m2+} and, possibly , \eqref{m4},  \eqref{m5}
and \eqref{m6}.
Finally, by \eqref{m7}, we get  
$ d \vee_{_{ \mathbf   M}} f \sqsubseteq_{_{ \mathbf   M}} f_1, \dots , f_ \ell$, thus the element
$d \vee_{_{ \mathbf   M}} f$ witnesses 
 condition $(a1)$ for the relation 
$(a, \{b_1, \dots, b_h\}) \precsim 
(e,\{f_1, \dots, f_\ell\})$.  
Condition (a2) holds
since,  
for every $i \leq h$, 
there is $j \leq k$ such that   
$b_i \sqsubseteq_{_{ \mathbf   M}} d_j$
and, 
for every 
$j \leq k$, there is $m \leq \ell$
such that $d_j \sqsubseteq_{_{ \mathbf   M}} f_m$, hence we get
$b_i \sqsubseteq_{_{ \mathbf   M}} f_m$
by \eqref{m2-}.

Since $\precsim $ is reflexive and transitive, then so is
$\sim$; hence $\sim$ is an equivalence
relation, being symmetric by definition.   

(ii) It is enough to show that
\begin{equation} \labbel{mum} 
\text{if  
$(a, \{b_1,  \dots, b_h\}) \precsim
(c, \{d_1,  \dots, d_k\})$, then
$ (a, \{\bar b\} ) \precsim (c, \{ \bar d  \} )$,}
 \end{equation}   
where we have set
$\bar b= a \vee_{_{ \mathbf   M}} b_1 \vee_{_{ \mathbf   M}}
 \dots \vee_{_{ \mathbf   M}} b_h$ and
$\bar d= c \vee_{_{ \mathbf   M}} d_1 \vee_{_{ \mathbf   M}} 
\dots \vee_{_{ \mathbf   M}} d_k  $. 

If $(a, \{b_1,  \dots, b_h\}) \precsim
(c, \{d_1,  \dots, d_k\})$, then,
by (a1), 
there is
$d \sqsubseteq_{_{ \mathbf   M}} d_1, 
\dots,  \allowbreak  d_k$
such that $a \leq _{_{ \mathbf   M}} c \vee_{_{ \mathbf   M}} d$.
By \eqref{m9}  and \eqref{m2*} we get 
$d \sqsubseteq_{_{ \mathbf   M}}  \bar d$,
hence clause (a1) is satisfied witnessing
$ (a, \{\bar b\} ) \precsim (c, \{ \bar d  \} )$.

Since $ c \sqsubseteq \bar d$, by \eqref{m1+}, we get
$a \sqsubseteq  _{_{ \mathbf   M}} c \vee_{_{ \mathbf   M}} d
\sqsubseteq  _{_{ \mathbf   M}} \bar d$, by \eqref{m1+} again
and \eqref{m7} using  the already proved 
$d \sqsubseteq_{_{ \mathbf   M}}  \bar d$.
Hence $a \sqsubseteq  _{_{ \mathbf   M}}  \bar d$, by \eqref{m2-}.
Since, for every $i \leq h$, 
there is $j \leq k$ such that   
$b_i \sqsubseteq_{_{ \mathbf   M}} d_j$, by (a2),
then,
 for every $i \leq h$, $b_i \sqsubseteq_{_{ \mathbf   M}} \bar d$
by \eqref{m2*}. Iterating \eqref{m7} we then get
  $ \bar b\sqsubseteq_{_{ \mathbf   M}} \bar d$,
and this proves condition (a2), thus the proof of 
(ii) is complete. 

(iii) By symmetry, it is enough to show that
if  
\begin{equation}\labbel{uffa} 
   (a, \{b_1,  \dots, b_h\}) \precsim
(c, \{d_1,  \dots, d_k\}),
  \end{equation} 
  then   
\begin{equation*} 
(a, \{b_1,  \dots, b_h\})  \vee (e, \{f_1,  \dots, f_\ell\}) \precsim
(c, \{d_1,  \dots, d_k\}) \vee (e, \{f_1,  \dots, f_\ell\}),
 \end{equation*}     
  that is, 
\begin{equation}\labbel{uffi}      
(a \vee_{_{ \mathbf   M}} e, \{b_1,  \dots, b_h, f_1,  \dots, f_\ell\}) ) \precsim
(c \vee_{_{ \mathbf   M}} e, \{d_1,  \dots, d_k, f_1,  \dots, f_\ell\}).
 \end{equation}
Equation \eqref{uffa}
is witnessed by  $a \leq_{_{ \mathbf   M}} c
 \vee_{_{ \mathbf   M}} d$, for some
$d \sqsubseteq_{_{ \mathbf   M}} d_1, 
\dots,  \allowbreak  d_k$.
By iterating \eqref{m6}, we have 
$d \sqsubseteq_{_{ \mathbf   M}} d_1, 
\dots,  \allowbreak  d_k, f_1,  \dots, f_\ell$
and this gives (a1) for \eqref{uffi}.
Clause (a2) for \eqref{uffi} follows from
clause (a2) for \eqref{uffa} and \eqref{m1}.

Condition \eqref{30},
is proved exactly as the corresponding condition
(3.2) in
\cite[Lemma 3.6]{mttna}. 
\end{proof}

\begin{proof}[Proof of Theorem \ref{univt2}]
Definition \ref{und2} is justified
by Lemma \ref{corre2}.

\begin{claim} \labbel{cl2} 
$\widetilde {   \mathbf M}' = (\widetilde { M},   \vee, K) $
is  a closure semilattice.
 \end{claim}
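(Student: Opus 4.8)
The plan is to verify the three defining properties of a closure operation — extensivity, idempotence, and isotonicity — for the operation $K$ on $\widetilde{M}$, using the description of $\leq$ on the quotient furnished by \eqref{30} in Lemma \ref{corre2}, together with the formula \eqref{3-1} for $\vee$. Throughout I will reduce statements about $\leq$ to statements about $\precsim$ on representatives, and then discharge those using the axioms \eqref{m1}--\eqref{m9} and their consequences.

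First I would prove \textbf{extensivity}, i.e.\ $[a,\{b_1,\dots,b_h\}] \leq K[a,\{b_1,\dots,b_h\}] = [a,\{\bar b\}]$ where $\bar b = a \vee_{_{\mathbf M}} b_1 \vee_{_{\mathbf M}} \dots \vee_{_{\mathbf M}} b_h$. By \eqref{30} this amounts to $(a,\{b_1,\dots,b_h\}) \precsim (a,\{\bar b\})$. Clause (a1) holds taking $d = a$ and using $a \leq_{_{\mathbf M}} a$ (so in fact $a \leq_{_{\mathbf M}} a \vee_{_{\mathbf M}} d$); clause (a2) requires, for each $i \leq h$, that $b_i \sqsubseteq_{_{\mathbf M}} \bar b$, which follows from $b_i \leq_{_{\mathbf M}} \bar b$ via \eqref{m1+}. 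Next, \textbf{idempotence}: I must show $KK[a,\{b_1,\dots,b_h\}] = K[a,\{b_1,\dots,b_h\}]$. Unwinding, $K[a,\{\bar b\}] = [a,\{a \vee_{_{\mathbf M}} \bar b\}] = [a,\{\bar b\}]$ since $a \leq_{_{\mathbf M}} \bar b$; so idempotence holds literally at the level of representatives, not merely up to $\sim$.

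The remaining property is \textbf{isotonicity}: if $[a,\{b_1,\dots,b_h\}] \leq [c,\{d_1,\dots,d_k\}]$ then $K[a,\{b_1,\dots,b_h\}] \leq K[c,\{d_1,\dots,d_k\}]$, i.e.\ $[a,\{\bar b\}] \leq [c,\{\bar d\}]$ with $\bar b$ as above and $\bar d = c \vee_{_{\mathbf M}} d_1 \vee_{_{\mathbf M}} \dots \vee_{_{\mathbf M}} d_k$. By \eqref{30} the hypothesis gives $(a,\{b_1,\dots,b_h\}) \precsim (c,\{d_1,\dots,d_k\})$, and I need $(a,\{\bar b\}) \precsim (c,\{\bar d\})$. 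This is essentially the content of \eqref{mum} established inside the proof of Lemma \ref{corre2}(ii): there it is shown that from $(a,\{b_1,\dots,b_h\}) \precsim (c,\{d_1,\dots,d_k\})$ one derives $a \sqsubseteq_{_{\mathbf M}} \bar d$ (giving (a1) with witness $d$ such that $a \leq_{_{\mathbf M}} c \vee_{_{\mathbf M}} d$, combined with \eqref{m9}, \eqref{m2*}) and $\bar b \sqsubseteq_{_{\mathbf M}} \bar d$ (giving (a2), by pushing each $b_i \sqsubseteq_{_{\mathbf M}} d_j$ forward via \eqref{m2*} to $b_i \sqsubseteq_{_{\mathbf M}} \bar d$ and iterating \eqref{m7}). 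So I can simply cite \eqref{mum}.

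I expect the verification to be essentially routine once \eqref{30}, \eqref{3-1}, and \eqref{mum} are in hand; the only mild subtlety — and the place I would be most careful — is bookkeeping the $a$-term inside $\bar b$, i.e.\ remembering that $K$ folds the first coordinate $a$ into the set coordinate. This is what makes idempotence hold on the nose (because $a \leq_{_{\mathbf M}} \bar b$ absorbs the repeated $a$), and it is also why extensivity needs clause (a2) and not just (a1). Having checked extensivity, idempotence, and isotonicity, and knowing from Lemma \ref{corre2}(iii) that $(\widetilde M, \vee)$ is a genuine join-semilattice, we conclude that $\widetilde{\mathbf M}' = (\widetilde M, \vee, K)$ is a closure semilattice, proving Claim \ref{cl2}.
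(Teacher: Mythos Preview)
Your proof is correct and takes essentially the same approach as the paper, which simply cites \eqref{30} and \eqref{mum} (deferring the routine verifications to \cite[Claim 3.7]{mttna}); you have filled in those details explicitly. One minor point in your extensivity check: the witness $d=a$ for clause (a1) must also satisfy $d \sqsubseteq_{_{\mathbf M}} \bar b$, which you should note follows from $a \leq_{_{\mathbf M}} \bar b$ via \eqref{m1+}, just as in your verification of (a2).
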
  

By Lemma \ref{corre2}(iii),   
$(\widetilde { M},   \vee)$ is a semilattice; the fact
that $K$ is a closure operation
 follows from \eqref{30} and \eqref{mum}.
See \cite[Claim 3.7]{mttna} for details. 

 Clause (1)
in Theorem \ref{univt2} follows 
from Claim \ref{cl2} and Proposition \ref{vere}.

Clause (2) is proved exactly as the corresponding
clause in \cite[Theorem 3.5]{mttna}.

Some new arguments are needed in order to  deal with (3).
If $\eta: \mathbf M \to \mathbf T$
is a homomorphism and there exists 
 $\widetilde{\eta}$ 
such that   
$\eta = \upsilon_{_\mathbf M} \circ \widetilde{\eta}$,
then necessarily 
$\widetilde{\eta}([a, \emptyset ]) =
   \widetilde{\eta}(\upsilon_{_\mathbf M}(a))
 = \eta (a) $, for every $a \in M$.
If furthermore $ \widetilde{\eta}$
is a $K$-homomorphism, then 
$ \widetilde{\eta}([b, \{ b \}]) =  K_{_\mathbf T} \eta (b)$, hence
necessarily 
\begin{equation}\labbel{nec} 
    \widetilde{\eta}([a, \{b_1, \dots, b_h   \} ]) \allowbreak =
 \eta (a) \vee_{_\mathbf T} K_{_\mathbf T} \eta (b_1) \vee_{_\mathbf T}
 \dots \vee_{_\mathbf T} K_{_\mathbf T} \eta (b_h).  \end{equation}
See \cite{mttna} for full details.
In particular, if $   \widetilde{\eta}$  
exists it is unique.

It remains  to show that the above condition
\eqref{nec} 
actually determines a $K$-homomorphism
$   \widetilde{\eta}$ from 
$\widetilde{ \mathbf M}$  
to $\mathbf T$. 
First, we need to check that 
if  
$(a, \{b_1,  \dots, b_h\}) \sim
(c, \{d_1,  \dots, d_k\})$, then 
$\eta (a) \vee_{_\mathbf T} K_{_\mathbf T} \eta (b_1) \vee_{_\mathbf T} \dots
 \vee_{_\mathbf T} K_{_\mathbf T} \eta (b_h)
= \eta (c) \vee_{_\mathbf T} K_{_\mathbf T} \eta (d_1) \vee_{_\mathbf T}
 \dots \vee_{_\mathbf T} K_{_\mathbf T} \eta (d_k)$, so that
$   \widetilde{\eta}$ is well-defined. 

Assume that clauses (a1) and (a2) in Definition \ref{und2} hold.
From $d \sqsubseteq_{_{ \mathbf   M}} d_1,
 \dots , d_k$ 
we get $ \eta (d) \sqsubseteq_{_\mathbf T} \eta (d_1), \dots, \eta (d_k)$,
since $\eta$ is a homomorphism,
hence
$\eta (d) \leq_{_\mathbf T} K_{_\mathbf T}\eta (d_1) 
\vee_{_\mathbf T} \dots \vee_{_\mathbf T} K_{_\mathbf T}\eta (d_k)$,
since $\mathbf T$ is assumed to be regular.  
From $a \leq_{_\mathbf M} c \vee_{_\mathbf M} d$, 
given by (a1), we get
$ \eta (a)  \leq_{_\mathbf T} \eta (c) \vee_{_\mathbf T} \eta (d)
\leq_{_\mathbf T} 
\eta (c) \vee_{_\mathbf T} 
 K_{_\mathbf T}\eta (d_1) 
\vee_{_\mathbf T} \dots \vee_{_\mathbf T} K_{_\mathbf T}\eta (d_k)$.
By \ref{und2}(a2),
for every $i \leq h$, there is 
 $j \leq k$ such that  
  $b_i \sqsubseteq_{_\mathbf M} d_j$. 
By Remark \ref{psc}(b), we get  
$ K_{_\mathbf T}\eta (b_i) \leq_{_\mathbf T} K_{_\mathbf T}\eta (d_j)$,
hence 
\begin{align*}  
\eta (a) \vee_{_\mathbf T} K_{_\mathbf T} \eta (b_1) \vee_{_\mathbf T}
 \dots \vee_{_\mathbf T} K_{_\mathbf T} \eta (b_h)
\leq_{_\mathbf T}
\\
 \eta (c) \vee_{_\mathbf T} K_{_\mathbf T}
 \eta (d_1) \vee_{_\mathbf T} \dots \vee_{_\mathbf T} K_{_\mathbf T} \eta (d_k).
\end{align*} 
The converse inequality is proved in the symmetrical way.
We have proved  that $   \widetilde{\eta}$ is well-defined.

That $   \widetilde{\eta}$ is a
semilattice homomorphism and a $K$-homomorphism
follow from \eqref{3-1},  
the definitions of $K$ and $\widetilde{\eta}$,
the assumption that $\eta$ is a homomorphism 
and 
the fact that $K_{_\mathbf T}$ is a closure operation.
See \cite{mttna} for more details.

In the present situation we need also  check that
 $   \widetilde{\eta}$ is a homomorphism of 
multi-argument specialization semilattices, 
namely, that $   \widetilde{\eta}$ preserves 
$ \sqsubseteq ^n$, for every $n$.
 However, this is immediate from Claim \ref{cl2},
the definition  of $ \sqsubseteq $ in
Definition \ref{und2} 
and Proposition \ref{vere}. 

Clause (4) follows from Clause (3)
applied to   $\eta = \psi \circ \upsilon_{_{\mathbf U} }$.
 \end{proof}

It is necessary to ask that 
 $\widetilde{\eta}$ is 
a $K$-homomorphism in Theorem \ref{univt2}(3);
compare a parallel observation shortly before 
Remark 3.4 in \cite{mttlib}.
On the other hand, as already remarked, 
 in Theorem \ref{univt2}(3) $\eta$ is not required to
preserve existing closures.

\section{Embedding into a closure space} \labbel{embsec} 

In the previous section
we have showed that every multi-argument specialization semilattice 
can be embedded into the multi-argument  specialization reduct
of a closure semilattice. Some arguments form \cite{mtt}
imply that every closure semilattice can be embedded into a closure space,
hence we get the corresponding result for  
multi-argument specialization semilattices.
To be strictly formal, in the next theorem a closure space
is considered as a structure of the form
$(\mathcal P(X), \cup, K)$. 

\begin{theorem} \labbel{emb}
Every closure semilattice can be embedded into
some closure space.
 \end{theorem}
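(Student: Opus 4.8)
The plan is to represent an abstract closure semilattice $\mathbf S = (S, \vee, K)$ concretely inside the power set of a suitable ground set $X$, sending each $s \in S$ to the set of ``points below $s$'' in a way that makes $\vee$ become $\cup$ and makes the abstract $K$ agree with a genuine closure operator on $\mathcal P(X)$. First I would recall that $\mathbf S$, being a semilattice, embeds via $s \mapsto \{\, s' \in S \mid s' \leq s \,\}$ into $(\mathcal P(S), \cup)$ — but this naive choice of $X = S$ does not obviously carry the closure, since $\{s' \mid s' \le Ks\}$ need not be the closure (in any natural sense) of $\{s' \mid s' \le s\}$. So the real work is choosing $X$ and the closure on $\mathcal P(X)$ correctly.

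The natural candidate is to pass through the associated \emph{principal} specialization semilattice and then use the representation machinery already developed in \cite{mtt} for specialization semilattices. Concretely: by Remark \ref{psc}(a) a closure semilattice is essentially the same thing as a principal specialization semilattice, so $\mathbf S$ yields a principal specialization semilattice $(S, \le, \sqsubseteq)$ with $s \sqsubseteq t \iff s \le Kt$. By the representation theorem of \cite{mtt}, $(S, \le, \sqsubseteq)$ embeds into $(\mathcal P(X), \cup, \sqsubseteq)$ for some closure space $(\mathcal P(X), \cup, K_X)$, where $\sqsubseteq$ on $\mathcal P(X)$ is $a \sqsubseteq b \iff a \subseteq K_X b$. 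Let $\iota : S \to \mathcal P(X)$ denote this embedding; it is a semilattice embedding that reflects and preserves $\sqsubseteq$. The point to check is then that $\iota$ is in fact a \emph{$K$-homomorphism} into the closure space, i.e.\ $\iota(Ks) = K_X(\iota s)$ for all $s$. One inclusion, $\iota(Ks) \subseteq K_X(\iota s)$, follows because $Ks \sqsubseteq s$ (namely $Ks \le KKs = Ks$, so $Ks \sqsubseteq s$), hence $\iota(Ks) \subseteq K_X(\iota s)$. For the reverse, I would argue that $K_X(\iota s)$, being the largest subset $\sqsubseteq$-below $\iota s$ in the principal structure $\mathcal P(X)$, must satisfy $K_X(\iota s) \subseteq \iota(Ks)$ provided the embedding $\iota$ is chosen to be \emph{closure-dense} or \emph{principal-reflecting}; this is exactly the kind of refinement of the $\mathbf S$-representation that is recorded (for specialization semilattices, with an eye to closure semilattices) in \cite{mtt,mttlib}.

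The main obstacle — and the step I would spend the most care on — is precisely this last point: the bare representation of $(S,\le,\sqsubseteq)$ inside some $(\mathcal P(X),\cup,\sqsubseteq)$ guarantees only that $\iota$ preserves and reflects $\sqsubseteq$, not that it transports the \emph{witness of principality}, i.e.\ the operation $K$ itself. To get an honest closure-semilattice embedding one needs the construction of $X$ in \cite{mtt} to be set up (or adapted) so that the closure of a basic set $\iota(s)$ is again basic, namely $\iota(Ks)$; equivalently, that no ``new'' closures are created at the level of the image of $S$. Given the remark in the text that ``some arguments from \cite{mtt} imply that every closure semilattice can be embedded into a closure space,'' I would either invoke the relevant strengthened statement from \cite{mtt} directly, or reprove it by the standard device: take $X$ to be (a subset of) the set of prime/ completely-join-irreducible-type filters or of pairs $(s, \text{``avoiding'' data})$ used there, and verify on that explicit $X$ that $K_X(\iota s) = \iota(Ks)$ by a filter/ separation argument — for $x \in K_X(\iota s)$ one exhibits, from the defining property of $x$, that $x \in \iota(t)$ forces $t \ge Ks$ whenever ... , contradicting $x \notin \iota(Ks)$. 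Once $\iota$ is shown to be a $K$-homomorphism and is already an injective semilattice map, it is an embedding of closure semilattices, which completes the proof.
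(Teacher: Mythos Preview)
Your proposal correctly identifies the central difficulty --- that a specialization-semilattice embedding need not preserve $K$ --- but does not actually resolve it. You end by saying you would ``either invoke the relevant strengthened statement from \cite{mtt} directly, or reprove it by \dots\ a filter/separation argument,'' and then sketch only the shape of such an argument. That is precisely the step where the content lies: an arbitrary embedding of $(S,\leq,\sqsubseteq)$ into some $(\mathcal P(X),\cup,\sqsubseteq)$ gives you $\iota(Ks)\subseteq K_X(\iota s)$ for free, but the reverse inclusion requires that the closure on $\mathcal P(X)$ be tailored so that nothing new gets squeezed between $\iota(s)$ and $\iota(Ks)$. Without carrying that out, the argument is circular --- you are essentially assuming the theorem you want to prove.

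The paper's proof avoids the detour through \cite{mtt} entirely and is both shorter and self-contained. It takes $X=S$ and uses the standard semilattice embedding $\varphi(a)=\{\,b\in S\mid a\nleq_{_{\mathbf S}} b\,\}$. The key move is then to \emph{define} the closure on $\mathcal P(S)$ so that the nontrivial closed sets are exactly the images $\varphi(z)$ of the $K_{_{\mathbf S}}$-closed elements $z\in S$: one sets $Kx=\bigcap\{\,\varphi(z)\mid x\subseteq\varphi(z),\ K_{_{\mathbf S}}z=z\,\}$. With this choice the verification $K\varphi(a)=\varphi(K_{_{\mathbf S}}a)$ is a two-line computation: if $\varphi(a)\subseteq\varphi(z)$ with $z$ closed, then $a\leq_{_{\mathbf S}} z=K_{_{\mathbf S}}z$, hence $K_{_{\mathbf S}}a\leq_{_{\mathbf S}} K_{_{\mathbf S}}z$, hence $\varphi(K_{_{\mathbf S}}a)\subseteq\varphi(z)$; taking the intersection over all such $z$ gives $\varphi(K_{_{\mathbf S}}a)\subseteq K\varphi(a)$, and the other inclusion is immediate since $\varphi(K_{_{\mathbf S}}a)$ is itself one of the sets in the intersection. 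Your proposal, by contrast, treats the closure on $X$ as given by an external representation theorem and then struggles to control it; the paper simply builds the closure to order.
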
 

\begin{proof}
Suppose that $\mathbf S = (S, \vee_{_\mathbf S}, K_{_\mathbf S} )$
is a closure semilattice and 
let $\varphi: S \to \mathcal P(S)$
be the function defined by  
$ \varphi (a) = \nup a = 
\{ b \in S \mid a \nleq_{_\mathbf S} b \}  $. 
We shall give  $\mathcal P(S)$ the structure of 
a closure space in such a way that 
$\varphi$  becomes an embedding of closure semilattices.

Clearly, $\varphi$  is a semilattice embedding, no matter the closure
operation.
Let $ \mathfrak F = \{ \, C \subseteq S \mid
 C= \varphi ( z), \text{ for some
$z \in S $ such that } K_{_\mathbf S}z=z \,\} $.
For $x \subseteq S$, set $Kx= \bigcap \{ \, C \in  \mathfrak F  \mid  x \subseteq C\, \} $,
where, conventionally, $S$ is the intersection of the empty family.
Clearly, $K$ is a closure operation, hence it remains to show that
$\varphi$  is a $K$-homomorphism.

If $a \in S$,
then $\varphi(K_{_\mathbf S}a) \in \mathfrak F$, 
 since $K_{_\mathbf S}K_{_\mathbf S}a=K_{_\mathbf S}a$,
thus 
$ K \varphi (a) \subseteq \varphi(K_{_\mathbf S} a)$, since
$\varphi (a) \subseteq \varphi(K_{_\mathbf S} a)$.

We now show the converse. If $\varphi (a) \subseteq C$
and $C \in \mathfrak F$, 
then
$C= \varphi ( z)$, for some
$z \in S $ such that $ K_{_\mathbf S}z=z $.
Thus $\varphi ( a) \subseteq \varphi (z)$,
hence $a \leq_{_\mathbf S} z$,
 since $\varphi$  is a semilattice embedding,
in particular, an order embedding. 
We then get $a \leq_{_\mathbf S} z = K_{_\mathbf S}z$,
hence
$ K_{_\mathbf S} a \leq_{_\mathbf S}  K_{_\mathbf S}z$,
thus 
$ \varphi ( K_{_\mathbf S} a)  \leq 
\varphi ( K_{_\mathbf S}z) = C$.
Since the above argument applies to every $C$ 
such that 
$\varphi (a) \subseteq C$, and 
$K \varphi (a)$ is the intersection of all such $C$s,
then 
$\varphi ( K_{_\mathbf S} a)  \leq K \varphi (a)$.

In conclusion,  $\varphi ( K_{_\mathbf S} a)  = K \varphi (a)$
and the proof is complete.
\end{proof}

\begin{corollary} \labbel{corm} 
Every multi-argument specialization semilattice
can be embedded into the multi-argument specialization semilattice
associated to some closure space.
\end{corollary}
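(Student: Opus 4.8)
The plan is to compose the two embedding results already established in the paper. By Theorem~\ref{univt2}(1)--(2), the map $\upsilon_{_\mathbf M}\colon \mathbf M \to \widetilde{\mathbf M}$ embeds an arbitrary multi-argument specialization semilattice $\mathbf M$ into the multi-argument specialization reduct of $\widetilde{\mathbf M}'=(\widetilde M, \vee, K)$, which is a \emph{closure semilattice} by Claim~\ref{cl2} inside the proof of that theorem. Then by Theorem~\ref{emb} there is an embedding $\varphi\colon \widetilde{\mathbf M}' \to (\mathcal P(X),\cup,K')$ of closure semilattices, for a suitable set $X$ (concretely $X=\widetilde M$, with $\varphi(a)=\nup a$ and $K'$ the closure induced by the family $\mathfrak F$). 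Composing, $\varphi\circ\upsilon_{_\mathbf M}\colon \mathbf M \to \mathcal P(X)$ is the map we want to show is an embedding of multi-argument specialization semilattices.

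The key point to verify is that this composition respects and reflects each relation $\sqsubseteq^n$, i.e.\ that it is an embedding and not merely a homomorphism. First I would observe that $\upsilon_{_\mathbf M}$ is already an embedding of multi-argument specialization semilattices by Theorem~\ref{univt2}(2) (the statement there is that it is an embedding, which in the model-theoretic sense used in the paper means injective and reflecting all relations of the language). Next, $\varphi$ is an embedding of closure semilattices, so it is in particular a semilattice order-embedding satisfying $\varphi(K_{\widetilde{\mathbf M}'}z)=K'\varphi(z)$. By Proposition~\ref{vere}(2), $\varphi$ is a homomorphism of the associated multi-argument specialization semilattices; but because $\varphi$ is an order-embedding and commutes with closure, the defining biconditional for $\sqsubseteq^n$ in the regular structures — namely $a\sqsubseteq^n b_1,\dots,b_n$ iff $a\le Kb_1\vee\dots\vee Kb_n$ (Definition~\ref{reg}, applicable since $\widetilde{\mathbf M}$ is regular by Theorem~\ref{univt2}(1) and the closure-space structure is regular by Proposition~\ref{vere}(1)) — transfers in both directions. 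Concretely, $\varphi(a)\le K'\varphi(b_1)\cup\dots\cup K'\varphi(b_n)$ becomes $\varphi(a)\le\varphi(K_{\widetilde{\mathbf M}'}b_1\vee\dots\vee K_{\widetilde{\mathbf M}'}b_n)$, which by order-embedding is equivalent to $a\le K_{\widetilde{\mathbf M}'}b_1\vee\dots\vee K_{\widetilde{\mathbf M}'}b_n$, i.e.\ $a\sqsubseteq^n b_1,\dots,b_n$ in $\widetilde{\mathbf M}$. Hence $\varphi$ is an embedding of multi-argument specialization semilattices, and the composition with the embedding $\upsilon_{_\mathbf M}$ is again an embedding.

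I do not anticipate a serious obstacle here; the corollary is a genuine corollary. The only care needed is to make explicit that ``embedding of closure semilattices'' upgrades to ``embedding of the associated multi-argument specialization reducts,'' which is exactly the regularity argument just sketched and mirrors the way Proposition~\ref{vere} is proved. One should also note that Theorem~\ref{emb} is stated for closure semilattices while what we feed in is $\widetilde{\mathbf M}'$, which Claim~\ref{cl2} confirms is a closure semilattice — so the hypotheses match. A brief remark that $\widetilde{\mathbf M}$ and the closure-space structure are both regular (so the biconditional, not just the ``if'' direction from \eqref{m2+}, is available) completes the argument.

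\begin{proof}
Let $\mathbf M$ be a multi-argument specialization semilattice. By Theorem~\ref{univt2}(1)--(2), $\upsilon_{_\mathbf M}$ embeds $\mathbf M$ into $\widetilde{\mathbf M}$, which is a principal regular multi-argument specialization semilattice, and by Claim~\ref{cl2} the structure $\widetilde{\mathbf M}'=(\widetilde M,\vee,K)$ is a closure semilattice whose multi-argument specialization reduct is $\widetilde{\mathbf M}$, by Proposition~\ref{vere}(1). By Theorem~\ref{emb} there is an embedding of closure semilattices $\varphi\colon \widetilde{\mathbf M}'\to(\mathcal P(X),\cup,K')$ for a suitable set $X$; in particular $\varphi$ is an order-embedding and $\varphi(K z)=K'\varphi(z)$ for all $z\in\widetilde M$. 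We claim $\varphi$, regarded as a map between the associated multi-argument specialization semilattices, is an embedding. It is a semilattice embedding, and it is a homomorphism for each $\sqsubseteq^n$ by Proposition~\ref{vere}(2). For the converse direction, suppose $\varphi(a)\sqsubseteq^n\varphi(b_1),\dots,\varphi(b_n)$ in $\mathcal P(X)$; since that structure is regular, $\varphi(a)\le K'\varphi(b_1)\cup\dots\cup K'\varphi(b_n)=\varphi(Kb_1\vee\dots\vee Kb_n)$, whence $a\le Kb_1\vee\dots\vee Kb_n$ because $\varphi$ is an order-embedding, i.e.\ $a\sqsubseteq^n b_1,\dots,b_n$ in $\widetilde{\mathbf M}$, using that $\widetilde{\mathbf M}$ is regular. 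Thus $\varphi$ is an embedding of multi-argument specialization semilattices, and therefore so is the composition $\varphi\circ\upsilon_{_\mathbf M}\colon\mathbf M\to(\mathcal P(X),\cup,K')$, which is the multi-argument specialization semilattice associated to the closure space $(\mathcal P(X),\cup,K')$.
\end{proof}
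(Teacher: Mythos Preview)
Your proof is correct and follows the same route as the paper: compose the embedding $\upsilon_{_\mathbf M}$ from Theorem~\ref{univt2}(1)--(2) with the closure-semilattice embedding $\varphi$ from Theorem~\ref{emb}, invoking Proposition~\ref{vere} to pass to the multi-argument reducts. You are in fact more explicit than the paper's one-line proof in verifying the reflecting direction---that $\varphi$ is an \emph{embedding}, not merely a homomorphism, of multi-argument specialization semilattices---which you correctly derive from regularity and the order-embedding property.
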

  
 \begin{proof}
Immediate from Theorems \ref{univt2}(1)(2)
and \ref{emb}, Proposition \ref{vere}(2) and 
Remark \ref{psc}(a). 
 \end{proof}

\end{document}